\theoremstyle{plain} 
\newtheorem{theorem}{Theorem}
\newtheorem{lemma}[theorem]{Lemma}
\theoremstyle{definition} 
\newtheorem{definition}[theorem]{Definition}
\newtheorem{remark}[theorem]{Remark}
\newcommand{\R}{\ensuremath{\mathbb{R}}}
\newcommand{\T}{\ensuremath{\mathbb{T}}}
\newcommand{\N}{\ensuremath{\mathbb{N}}}
\newcommand{\C}{\ensuremath{\mathbb{C}}}
\DeclareMathOperator{\czero}{C_{rd}}
\DeclareMathOperator{\crdone}{C^{\Delta}_{rd}}
\DeclareMathOperator{\crdtwo}{C^{\Delta^2}_{rd}}
\numberwithin{equation}{section}
\numberwithin{theorem}{section}
\begin{document}
\author[anderson]{Douglas R. Anderson}
\title[hyers--ulam stability on time scales]{Hyers--Ulam stability of higher-order Cauchy-Euler dynamic equations on time scales}
\address{Department of Mathematics, Concordia College, Moorhead, MN 56562 USA}
\email{andersod@cord.edu, bgates@cord.edu, dheuer@cord.edu}
\urladdr{http://www.cord.edu/faculty/andersod/}

\keywords{Difference equations; quantum equations; non-homogeneous equations.}
\subjclass[2010]{34N05, 26E70, 39A10}

\begin{abstract}
We establish the stability of higher-order linear nonhomogeneous Cauchy-Euler dynamic equations on time scales in the sense of Hyers and Ulam. That is, if an approximate solution of a higher-order Cauchy-Euler equation exists, then there exists an exact solution to that dynamic equation that is close to the approximate one. Some examples illustrate the applicability of the main results.
\end{abstract}

\maketitle

\thispagestyle{empty}


\section{introduction}
In 1940, Ulam \cite{ulam} posed the following problem concerning the stability of functional equations: give conditions in order for a linear mapping near an approximately linear mapping to exist. The problem for the case of approximately additive mappings was solved by Hyers \cite{hyers} who proved that the Cauchy equation is stable in Banach spaces, and the result of Hyers was generalized by Rassias \cite{rassias}. Alsina and Ger \cite{ag} were the first authors who investigated the Hyers-Ulam stability of a differential equation.

Since then there has been a significant amount of interest in Hyers-Ulam stability, especially in relation to ordinary differential equations, for example see \cite{gavruta,gjl,jung,jung2,jung3,jung4,mmt,mmt2,pr,rus}. Also of interest are many of the articles in a special issue guest edited by Rassias \cite{rass2}, dealing with Ulam, Hyers-Ulam, and Hyers-Ulam-Rassias stability in various contexts. Also see Li and Shen \cite{lishen1,lishen2}, Wang, Zhou, and Sun \cite{wang}, and Popa et al \cite{popa, bpx}.  Andr\'{a}s and M\'{e}sz\'{a}ros \cite{andras} recently used an operator approach to show the stability of linear dynamic equations on time scales with constant coefficients, as well as for certain integral equations. Anderson et al \cite[Corollary 2.6]{and} proved the following concerning second-order non-homogeneous Cauchy-Euler equations on time scales:


\begin{theorem}[Cauchy-Euler Equation]\label{coroCE}
Let $\lambda_1,\lambda_2\in\R$ (or $\lambda_2=\overline{\lambda_1}$, the complex conjugate) be such that 
$$ t+\lambda_k \mu(t) \ne 0, \quad k=1,2 $$
for all $t\in[a,\sigma(b)]_{\T}$, where $a\in\T$ satisfies $a>0$. Then the Cauchy-Euler equation
\begin{equation}\label{CEeq}
  x^{\Delta\Delta}(t) + \frac{1-\lambda_1-\lambda_2}{\sigma(t)} \; x^\Delta(t) + \frac{\lambda_1\lambda_2}{t\sigma(t)} \; x(t) = f(t), \quad t\in[a,b]_\T
\end{equation}
has Hyers-Ulam stability on $[a,b]_{\T}$. To wit, if there exists $y\in\crdtwo[a,b]_\T$ that satisfies 
$$ \left| y^{\Delta\Delta}(t) + \frac{1-\lambda_1-\lambda_2}{\sigma(t)} \; y^\Delta(t) + \frac{\lambda_1\lambda_2}{t\sigma(t)} \; y(t) - f(t) \right| \le \varepsilon $$
for $t\in[a,b]_\T$, then there exists a solution $u\in\crdtwo[a,b]{_\T}$ of \eqref{CEeq} given by
$$ u(t) = e_{\frac{\lambda_1}{t}}\left(t,\tau_2\right)y\left(\tau_2\right)+\int_{\tau_2}^t e_{\frac{\lambda_1}{t}}(t,\sigma(s))w(s)\Delta s, \quad\text{any}\quad \tau_2\in[a,\sigma^2(b)]_{\T}, $$
where for any $\tau_1\in[a,\sigma(b)]_{\T}$ the function $w$ is given by
$$ w(s) = e_{\frac{\lambda_2-1}{\sigma(s)}}(s,\tau_1) \left[y^\Delta(\tau_1)-\frac{\lambda_1}{\tau_1}y(\tau_1)\right] + \int_{\tau_1}^s e_{\frac{\lambda_2-1}{\sigma(s)}}(s,\sigma(\zeta)) f(\zeta) \Delta \zeta, $$
such that $|y-u|\le K\varepsilon$ on $[a,\sigma^2(b)]_\T$ for some constant $K>0$.
\end{theorem}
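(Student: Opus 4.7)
The plan is to reduce to the first-order case by factoring the Cauchy--Euler operator into a composition of two first-order dynamic operators. A direct calculation using the time-scale quotient rule $(x/t)^\Delta = x^\Delta/\sigma(t) - x/(t\sigma(t))$ shows that if we set $w := x^\Delta - (\lambda_1/t) x$, then
$$w^\Delta - \frac{\lambda_2-1}{\sigma(t)} w \;=\; x^{\Delta\Delta} + \frac{1-\lambda_1-\lambda_2}{\sigma(t)} x^\Delta + \frac{\lambda_1\lambda_2}{t\sigma(t)} x.$$
The hypotheses $t+\lambda_k\mu(t)\neq 0$ translate exactly into regressivity of $\lambda_1/t$ and $(\lambda_2-1)/\sigma(t)$ (indeed, $1+\tfrac{\lambda_2-1}{\sigma(t)}\mu(t)=\tfrac{t+\lambda_2\mu(t)}{\sigma(t)}$), so the exponentials $e_{\lambda_1/t}$ and $e_{(\lambda_2-1)/\sigma}$ are well defined on $[a,\sigma^2(b)]_\T$.

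Now set $v(t) := y^\Delta(t) - \frac{\lambda_1}{t} y(t)$; the hypothesis on $y$ becomes $\bigl|v^\Delta - \frac{\lambda_2-1}{\sigma(t)} v - f\bigr| \le \varepsilon$. Invoke the first-order Hyers--Ulam result (established earlier in the paper for equations $w^\Delta - p(t)w = g(t)$ with regressive $p$) to obtain an exact solution $w$ of $w^\Delta - \tfrac{\lambda_2-1}{\sigma(t)} w = f$ with $|v-w| \le K_1 \varepsilon$; this $w$ is precisely the variation-of-parameters expression in the theorem, based at the arbitrary point $\tau_1$. Then let $u$ be the solution of $u^\Delta - \tfrac{\lambda_1}{t} u = w$ with $u(\tau_2) = y(\tau_2)$, which agrees with the explicit formula displayed in the statement; by the factorization, $u$ solves \eqref{CEeq} exactly.

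To control $|y-u|$, put $z := y-u$; then $z^\Delta - \tfrac{\lambda_1}{t} z = v-w$ with $z(\tau_2) = 0$, so variation of constants yields
$$z(t) \;=\; \int_{\tau_2}^t e_{\frac{\lambda_1}{t}}\bigl(t,\sigma(s)\bigr)\bigl(v(s)-w(s)\bigr)\,\Delta s.$$
Since this kernel is rd-continuous on the compact set $[a,\sigma^2(b)]_\T\times[a,\sigma(b)]_\T$, it is bounded by some $M$, so $|z(t)| \le M(b-a)K_1\varepsilon =: K\varepsilon$. The main technical obstacle is verifying the factorization cleanly on a time scale (where one must track the shift $\sigma$ through each product/quotient rule) and confirming that the regressivity conditions and the intervals on which $\tau_1,\tau_2$ may be chosen line up correctly; once that is in place, the two Hyers--Ulam estimates chain together almost automatically, and the crucial normalization $u(\tau_2) = y(\tau_2)$ is what kills the homogeneous part of $z$, leaving only the integral to estimate.
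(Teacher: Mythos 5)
Your proposal is correct and follows essentially the same strategy as the paper: factor the Cauchy--Euler operator into first-order factors $\left(\varphi D-\lambda_k I\right)$ with $\varphi(t)=t$, apply the first-order Hyers--Ulam/variation-of-parameters lemma to the outer factor, then solve the inner first-order equation exactly with the normalization $u(\tau_2)=y(\tau_2)$ and chain the two estimates, which is precisely the iterative scheme used in the paper's general $n$th-order proof (and in the cited second-order source). The only trivial slip is the length constant in your final bound, which should be the length of $[a,\sigma^2(b)]_\T$ rather than $b-a$.
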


The motivation for this work is to extend Theorem \ref{coroCE} to the general $n$th-order Cauchy-Euler dynamic equation; we will show the stability in the sense of Hyers and Ulam of the equation
$$ \sum_{k=0}^n \alpha_k M_k y(t)=f(t), \quad M_0y(t):=y(t), \quad M_{k+1}y(t):=\varphi(t)\left(M_ky\right)^\Delta(t),\; k=0,1,\cdots,n-1. $$
This is essentially \cite[(2.14)]{bp2} if $\varphi(t)=t$ and $f(t)=0$.
Throughout this work we assume the reader has a working knowledge of time scales as can be found in Bohner and Peterson \cite{bp1,bp2}, originally introduced by Hilger \cite{hilger}.


\section{Hyers-Ulam stability for higher-order Cauchy-Euler dynamic equations}

In this section we establish the Hyers-Ulam stability of the higher-order non-homogeneous Cauchy-Euler dynamic equation on time scales of the form
\begin{equation}\label{2ndivc}
 \sum_{k=0}^n \alpha_k M_k y(t)=f(t), \quad M_0y(t):=y(t), \quad M_{k+1}y(t):=\varphi(t)\left(M_ky\right)^\Delta(t),\; k=0,1,\cdots,n-1
\end{equation}
for given constants $\alpha_k\in\R$ with $\alpha_n\equiv 1$, and for functions $\varphi,f\in\czero[a,b]_\T$, using the following definition.


\begin{definition}[Hyers-Ulam stability]
Let $\varphi,f\in\czero[a,b]_\T$ and $n\in\N$. If whenever $M_kx\in\crdone[a,b]{_\T}$ satisfies
$$ \left|  \sum_{k=0}^n \alpha_k M_k x(t) - f(t) \right|\le \varepsilon, \quad t\in[a,b]_{\T} $$
there exists a solution $u$ of \eqref{2ndivc} with $M_ku\in\crdone[a,b]{_\T}$ for $k=0,1,\cdots,n-1$ such that $|x-u|\le K\varepsilon$ on $[a,\sigma^n(b)]_\T$ for some constant $K>0$, then \eqref{2ndivc} has Hyers-Ulam stability $[a,b]_\T$.
\end{definition}


\begin{remark}
Before proving the Hyers-Ulam stability of \eqref{2ndivc} we will need the following lemma, which allows us to factor \eqref{2ndivc} using the elementary symmetric polynomials \cite{pbg} in the $n$ symbols $\rho_1,\cdots,\rho_n$ given by
\begin{eqnarray*}
 s_1^n &=& s_1(\rho_1,\cdots,\rho_n) = \sum_i \rho_i \\
 s_2^n &=& s_2(\rho_1,\cdots,\rho_n) = \sum_{i<j} \rho_i\rho_j \\
 s_3^n &=& s_3(\rho_1,\cdots,\rho_n) = \sum_{i<j<k} \rho_i\rho_j\rho_k \\  
 s_4^n &=& s_4(\rho_1,\cdots,\rho_n) = \sum_{i<j<k<\ell} \rho_i\rho_j\rho_k\rho_{\ell} \\
  &\vdots& \\
 s_t^n &=& s_t(\rho_1,\cdots,\rho_n) = \sum_{i_1<i_2<\cdots<i_t} \rho_{i_1}\rho_{i_2}\cdots\rho_{i_t} \\
  &\vdots& \\
 s_n^n &=& s_n(\rho_1,\cdots,\rho_n) = \rho_{1}\rho_{2}\rho_3\cdots\rho_{n}.
\end{eqnarray*}
In general, we let $s_i^j$ represent the $i$th elementary symmetric polynomial on $j$ symbols. Then, given the $\alpha_k$ in \eqref{2ndivc}, introduce the characteristic values $\lambda_k\in\C$ via the elementary symmetric polynomial $s_t^n$ on the $n$ symbols $-\lambda_1,\cdots,-\lambda_n$, where 
\begin{equation}\label{alphas}
 \alpha_k=s_{n-k}^n=s_{n-k}(-\lambda_1,\cdots,-\lambda_n)= \sum_{i_1<i_2<\cdots<i_{n-k}} (-1)^{n-k}\lambda_{i_1}\lambda_{i_2}\cdots\lambda_{i_{n-k}}, \quad \alpha_n=s_0\equiv 1.
\end{equation}

\end{remark}


\begin{lemma}[Factorization]\label{lemma2.3}
Given $y,\varphi\in\czero[a,b]_\T$ and $\alpha_k\in\R$ with $\alpha_n\equiv 1$, let $M_ky\in\crdone[a,b]{_\T}$, where $M_0y(t):=y(t)$ and $M_{k+1}y(t):=\varphi(t)\left(M_ky\right)^\Delta(t)$ for $k=0,1,\cdots,n-1$. Then we have the factorization 
\begin{equation}\label{factorize}
 \sum_{k=0}^n \alpha_k M_k y(t)=\prod_{k=1}^{n} \left(\varphi D-\lambda_kI\right)y(t), \qquad n\in\N,
\end{equation}
where the differential operator $D$ is defined via $Dx=x^\Delta$ for $x\in\crdone[a,b]_\T$, and $I$ is the identity operator.
\end{lemma}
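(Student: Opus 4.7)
The plan is to induct on $n$, after observing that $M_k y = (\varphi D)^k y$, so that both sides of \eqref{factorize} are polynomial expressions in the single ``scale derivative'' operator $X := \varphi D$ applied to $y$. Granting this, the identity reduces to the classical algebraic fact that
\[ \sum_{k=0}^n \alpha_k X^k = \prod_{k=1}^n (X - \lambda_k I) \]
whenever $\alpha_k = s_{n-k}(-\lambda_1,\dots,-\lambda_n)$, which is just the expansion of $\prod(X-\lambda_k)$ in a commutative ring.

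The base case $n=1$ is immediate: with $\alpha_0=-\lambda_1$ and $\alpha_1=1$, both sides equal $\varphi y^\Delta - \lambda_1 y$. For the inductive step, suppose the factorization holds in $n-1$ variables. The standard Pascal-type recursion for elementary symmetric polynomials gives
\[ s_t(-\lambda_1,\dots,-\lambda_n) = s_t(-\lambda_1,\dots,-\lambda_{n-1}) - \lambda_n\, s_{t-1}(-\lambda_1,\dots,-\lambda_{n-1}), \]
so writing $\alpha_k^{(n)}$ for the coefficients in $n$ variables and taking $t=n-k$, we obtain $\alpha_k^{(n)} = \alpha_{k-1}^{(n-1)} - \lambda_n \alpha_k^{(n-1)}$, with the boundary convention $\alpha_{-1}^{(n-1)}=\alpha_n^{(n-1)}=0$. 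Splitting $\sum_{k=0}^n \alpha_k^{(n)} M_k y$ accordingly and pulling $\varphi D$ out of the first piece via $M_{k+1}y = \varphi (M_k y)^\Delta$, one finds
\[ \sum_{k=0}^n \alpha_k^{(n)} M_k y = \varphi\Bigl(\sum_{k=0}^{n-1}\alpha_k^{(n-1)} M_k y\Bigr)^{\!\Delta} - \lambda_n \sum_{k=0}^{n-1} \alpha_k^{(n-1)} M_k y = (\varphi D - \lambda_n I)\sum_{k=0}^{n-1}\alpha_k^{(n-1)} M_k y, \]
and applying the inductive hypothesis to the inner sum finishes the argument.

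The step demanding genuine care is not the induction itself but the combinatorial recursion for $\alpha_k^{(n)}$ together with the index bookkeeping at the boundary terms $k=0$ and $k=n$. It is also worth remarking that the product on the right-hand side of \eqref{factorize} is unambiguous: the factors $(\varphi D - \lambda_k I)$ all commute with one another because they are polynomials in the single operator $\varphi D$, so the composition order is immaterial and the product may be built up from either end of the sequence $\lambda_1,\dots,\lambda_n$. Finally, the required smoothness to compose these operators at each stage of the induction is precisely the hypothesis $M_k y \in \crdone[a,b]_\T$ that is built into the statement.
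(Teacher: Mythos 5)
Your proof is correct and follows essentially the same route as the paper: induction on $n$ using the Pascal-type recursion for the elementary symmetric polynomials (your $\alpha_k^{(n)} = \alpha_{k-1}^{(n-1)} - \lambda_n\alpha_k^{(n-1)}$ is exactly the identity $s_{n+1-k}^{n+1}=s_{n+1-k}^{n}-\lambda_{n+1}s_{n-k}^{n}$ the paper uses), pulling $\varphi D$ out of the shifted sum by linearity and then applying the inductive hypothesis to peel off the factor $(\varphi D-\lambda_n I)$. Your additional framing via $M_k y=(\varphi D)^k y$ and commuting polynomials in the single operator $\varphi D$ is a nice conceptual gloss but does not change the substance of the argument.
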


\begin{proof}
We proceed by mathematical induction on $n\in\N$, utilizing the substitution defined in \eqref{alphas}. For $n=1$, 
$$ \sum_{k=0}^n \alpha_k M_k y(t) = \alpha_0 M_0 y(t) + \alpha_1 M_1 y(t) = s_1(-\lambda_1) y(t) + 1\cdot \varphi(t)y^\Delta(t) = \left(\varphi D-\lambda_1I\right)y(t) $$
and the result holds. Assume \eqref{factorize} holds for $n\ge 1$. Then we have $\alpha_{n+1}\equiv 1$ and
\begin{eqnarray*}
 \sum_{k=0}^{n+1} \alpha_k M_k y(t) &=& \alpha_0 y(t) + \sum_{k=1}^n \alpha_k M_k y(t) + M_{n+1}y(t) \\
 &=& s_{n+1}^{n+1} y(t) + \sum_{k=1}^n s_{n+1-k}^{n+1} M_k y(t) + \varphi(t)\left(M_{n}y\right)^\Delta(t) \\
 &=& -\lambda_{n+1}s_n^n y(t) + \sum_{k=1}^n \left(s_{n+1-k}^{n}-\lambda_{n+1}s_{n-k}^n\right)M_k y(t) + \varphi(t)D\left(M_{n}y\right)(t) \\
 &=& -\lambda_{n+1}\left[s_n^n y(t) + \sum_{k=1}^n s_{n-k}^n M_k y(t)\right] + \sum_{k=1}^n s_{n+1-k}^{n}M_k y(t) + \varphi(t)D\left(M_{n}y\right)(t) \\
 &=& -\lambda_{n+1}\sum_{k=0}^n s_{n-k}^n M_k y(t) + \varphi(t)D\left(\sum_{k=1}^n s_{n+1-k}^{n}M_{k-1} y(t) + M_{n}y\right)(t) \\
 &=& -\lambda_{n+1}\sum_{k=0}^n s_{n-k}^n M_k y(t) + \varphi(t)D\left(\sum_{k=0}^{n-1} s_{n-k}^{n}M_{k} y(t) + M_{n}y\right)(t) \\
 &=& -\lambda_{n+1}\sum_{k=0}^n s_{n-k}^n M_k y(t) + \varphi(t)D\sum_{k=0}^{n} s_{n-k}^{n}M_{k} y(t) \\
 &=& \left(\varphi(t)D-\lambda_{n+1}I\right)\sum_{k=0}^n s_{n-k}^n M_k y(t) \\
 &=& \left(\varphi(t)D-\lambda_{n+1}I\right)\sum_{k=0}^n \alpha_{k} M_k y(t) \\
 &=& \left(\varphi(t)D-\lambda_{n+1}I\right)\prod_{k=1}^{n} \left(\varphi D-\lambda_kI\right)y(t)
\end{eqnarray*}
and the proof is complete.
\end{proof}


\begin{theorem}[Hyers-Ulam Stability]\label{nonhomodeltadelta}
Given $y,\varphi,f\in\czero[a,b]_\T$ with $|\varphi|\ge A>0$ for some constant $A$, and $\alpha_k\in\R$ with $\alpha_n\equiv 1$, consider \eqref{2ndivc} with $M_ky\in\crdone[a,b]{_\T}$ for $k=0,\cdots,n-1$. 
Using the $\lambda_k$ from the factorization in Lemma $\ref{lemma2.3}$, assume 
\begin{equation}\label{CEregress} 
 \varphi(t)+\lambda_k \mu(t) \ne 0, \quad k=1,2,\cdots,n
\end{equation}
for all $t\in[a,\sigma^{n-1}(b)]_\T$. Then \eqref{2ndivc} has Hyers-Ulam stability on $[a,b]_\T$.
\end{theorem}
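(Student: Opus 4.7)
The plan is to reduce the $n$th-order problem to an iterated cascade of first-order dynamic equations via the factorization of Lemma \ref{lemma2.3}, and then to combine the Hyers-Ulam stability of each first-order factor. Writing
$$ \sum_{k=0}^n \alpha_k M_k y(t) = (\varphi D-\lambda_n I)(\varphi D-\lambda_{n-1}I)\cdots(\varphi D-\lambda_1 I)y(t), $$
set $v_0:=y$ and $v_k:=(\varphi D-\lambda_k I)v_{k-1}=\varphi v_{k-1}^\Delta-\lambda_k v_{k-1}$ for $k=1,\dots,n$, so the hypothesis reads $|v_n(t)-f(t)|\le\varepsilon$ on $[a,b]_\T$, with each $v_k$ defined on $[a,\sigma^{n-k}(b)]_\T$.

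The main building block is a first-order Hyers-Ulam lemma. Under \eqref{CEregress}, $\lambda/\varphi$ is regressive, so every solution of $\varphi z^\Delta-\lambda z=g$ is given by variation of parameters
$$ z(t) = e_{\lambda/\varphi}(t,\tau)z(\tau)+\int_\tau^t e_{\lambda/\varphi}(t,\sigma(s))\frac{g(s)}{\varphi(s)}\Delta s. $$
If $\tilde z$ satisfies $|\varphi\tilde z^\Delta-\lambda\tilde z-g|\le\delta$, then taking $z$ with $z(\tau)=\tilde z(\tau)$ yields $|\tilde z-z|\le\tilde K\delta$, where $\tilde K$ depends only on a uniform bound of $|e_{\lambda/\varphi}(\cdot,\cdot)|$ on the compact time-scale interval and on the constant $A$ with $|\varphi|\ge A$. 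This is essentially the first-order analogue already established in \cite{and}.

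Next I unwind the cascade from the outside in. Put $u_n:=f$; then $|u_n-v_n|\le\varepsilon$. Given an exact $u_k$ with $|u_k-v_k|\le K_k\varepsilon$, apply the first-order lemma at $\lambda=\lambda_k$, $g=u_k$, with pivot $u_{k-1}(\tau_k):=v_{k-1}(\tau_k)$ to construct an exact solution $u_{k-1}$ of $(\varphi D-\lambda_k I)u_{k-1}=u_k$. The difference $w:=v_{k-1}-u_{k-1}$ then solves $\varphi w^\Delta-\lambda_k w=v_k-u_k$ with $w(\tau_k)=0$, so the variation-of-parameters formula bounds $|w|\le K_{k-1}\varepsilon$. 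After $n$ iterations, successively applying the operators $\varphi D-\lambda_j I$ to $u:=u_0$ reproduces $u_n=f$, so $u$ is an exact solution of \eqref{2ndivc}, and $|y-u|=|v_0-u_0|\le K\varepsilon$ on $[a,\sigma^n(b)]_\T$ for $K$ a product of the per-step constants.

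The main obstacle I expect is the bookkeeping: tracking how the per-step constants compose through uniform bounds on the time-scale exponentials (which are finite on the compact intervals $[a,\sigma^{n-k}(b)]_\T$ by \eqref{CEregress} and continuity of $\varphi$), and handling the possibility that some $\lambda_k$ are complex. For the latter, one may either complexify and note that the composed operator and the final $u$ are real, or group conjugate pairs $\lambda_k=\overline{\lambda_j}$ into real second-order factors so that each intermediate $u_k$ remains real-valued, as in the second-order case of Theorem \ref{coroCE}.
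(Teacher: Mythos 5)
Your proposal is correct and follows essentially the same route as the paper: factor via Lemma \ref{lemma2.3}, form the cascade $v_k=(\varphi D-\lambda_kI)v_{k-1}$ (the paper's $g_k$), and peel off one first-order factor at a time using the first-order Hyers--Ulam result of \cite{and} with the same pivot choice, so your $u_{n-k}$ is exactly the paper's $w_k$ and the final constant is the same product of per-step constants. Your explicit remarks on uniform bounds for the exponentials and on complex conjugate pairs of $\lambda_k$ only make explicit points the paper leaves implicit.
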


\begin{proof}
Let $\varepsilon>0$ be given, and suppose there is a function $x$, with $M_kx\in\crdone[a,b]{_\T}$, that satisfies
$$ \left|  \sum_{k=0}^n \alpha_k M_k x(t) - f(t) \right|\le \varepsilon, \quad t\in[a,b]_{\T}. $$
We will show there exists a solution $u$ of \eqref{2ndivc} with $M_ku\in\crdone[a,b]{_\T}$ for $k=0,1,\cdots,n-1$ such that $|x-u|\le K\varepsilon$ on $[a,\sigma^n(b)]_\T$ for some constant $K>0$.

To this end, set
\begin{eqnarray*}
 g_1 &=& \varphi x^\Delta-\lambda_1 x = \left(\varphi D-\lambda_1 I \right)x \\
 g_2 &=& \varphi g_1^\Delta-\lambda_2 g_1 = \left(\varphi D-\lambda_2 I \right)g_1 \\
 \vdots & & \\
 g_k &=& \varphi g_{k-1}^\Delta-\lambda_k g_{k-1} = \left(\varphi D-\lambda_k I \right)g_{k-1} \\
 \vdots & & \\
 g_n &=& \varphi g_{n-1}^\Delta-\lambda_n g_{n-1} = \left(\varphi D-\lambda_n I \right)g_{n-1}.
\end{eqnarray*}
This implies by Lemma \ref{lemma2.3} that
$$ g_n(t)-f(t) = \sum_{k=0}^n \alpha_k M_k x(t) - f(t), $$
so that
$$ \left| g_n(t) - f(t) \right|\le \varepsilon, \quad t\in[a,b]_{\T}. $$
By the construction of $g_n$ we have $\left|\varphi g_{n-1}^\Delta-\lambda_n g_{n-1}-f \right| \le \varepsilon$, that is
$$ \left|g_{n-1}^\Delta-\frac{\lambda_n}{\varphi} g_{n-1}-\frac{f}{\varphi} \right| \le \frac{\varepsilon}{|\varphi|} \le \frac{\varepsilon}{A}. $$
By \cite[Lemma 2.3]{and} and \eqref{CEregress} there exists a solution $w_1\in\crdone[a,b]{_\T}$ of
\begin{equation}\label{w1sol}
 w^\Delta(t) - \frac{\lambda_n}{\varphi(t)}w(t)-\frac{f(t)}{\varphi(t)}=0, \quad\text{or}\quad \varphi(t) w^\Delta(t) - \lambda_n w(t)-f(t)=0,
\end{equation}
$t\in[a,b]_\T$, where $w_1$ is given by
$$ w_1(t) = e_\frac{\lambda_n}{\varphi}(t,\tau_1)g_{n-1}(\tau_1)+\int_{\tau_1}^t e_\frac{\lambda_n}{\varphi}(t,\sigma(s))\frac{f(s)}{\varphi(s)}\Delta s, \quad\text{any}\quad \tau_1\in[a,\sigma(b)]_{\T}, $$
and there exists an $L_1>0$ such that
$$ |g_{n-1}(t)-w_1(t)| \le L_1 \varepsilon/A, \quad t\in[a,\sigma(b)]_\T. $$
Since $g_{n-1} = \varphi g_{n-2}^\Delta-\lambda_{n-1} g_{n-2}$, we have that
$$ |\varphi g_{n-2}^\Delta-\lambda_{n-1} g_{n-2}(t)-w_1(t)| \le L_1 \varepsilon/A, \quad t\in[a,\sigma(b)]_\T. $$
Again we apply \cite[Lemma 2.3]{and} to see that there exists a solution $w_2\in\crdone[a,\sigma(b)]{_\T}$ of
$$ w^\Delta(t) - \frac{\lambda_{n-1}}{\varphi(t)}w(t)-\frac{w_1(t)}{\varphi(t)}=0, \quad\text{or}\quad \varphi(t) w^\Delta(t) - \lambda_{n-1} w(t)-w_1(t)=0, $$
$t\in[a,\sigma(b)]_\T$, where $w_2$ is given by
$$ w_2(t) = e_\frac{\lambda_{n-1}}{\varphi}(t,\tau_2)g_{n-2}(\tau_2)+\int_{\tau_2}^t e_\frac{\lambda_{n-1}}{\varphi}(t,\sigma(s))\frac{w_1(s)}{\varphi(s)}\Delta s, \quad\text{any}\quad \tau_2\in[a,\sigma^2(b)]_{\T}, $$
and there exists an $L_2>0$ such that
$$ |g_{n-2}(t)-w_2(t)| \le L_2L_1 \varepsilon/A^2, \quad t\in[a,\sigma^2(b)]_\T. $$

Continuing in this manner, we see that for $k=1,2,\cdots,n-1$ there exists a solution $w_k\in\crdone[a,\sigma^{k-1}(b)]{_\T}$ of
$$ w^\Delta(t) - \frac{\lambda_{n-k+1}}{\varphi(t)}w(t)-\frac{w_{k-1}(t)}{\varphi(t)}=0, \quad\text{or}\quad \varphi(t) w^\Delta(t) - \lambda_{n-k+1} w(t)-w_{k-1}(t)=0, $$
$t\in[a,\sigma^{k-1}(b)]_\T$, where $w_k$ is given by
\begin{equation}\label{wkform}
 w_k(t) = e_\frac{\lambda_{n-k+1}}{\varphi}(t,\tau_k)g_{n-k}(\tau_k)+\int_{\tau_k}^t e_\frac{\lambda_{n-k+1}}{\varphi}(t,\sigma(s))\frac{w_{k-1}(s)}{\varphi(s)}\Delta s, \quad\text{any}\quad \tau_k\in[a,\sigma^k(b)]_{\T}, 
\end{equation}
and there exists an $L_k>0$ such that
$$ |g_{n-k}(t)-w_k(t)| \le \prod_{j=1}^{k}L_j \varepsilon/A^k, \quad t\in[a,\sigma^k(b)]_\T. $$
In particular, for $k=n-1$, 
$$ |g_{1}(t)-w_{n-1}(t)| \le \prod_{j=1}^{n-1}L_j \varepsilon/A^{n-1}, \quad t\in[a,\sigma^{n-1}(b)]_\T $$
implies by the definition of $g_1$ that 
$$ \left| x^\Delta(t)-\frac{\lambda_1}{\varphi(t)}x(t)-\frac{w_{n-1}(t)}{\varphi(t)} \right| \le \prod_{j=1}^{n-1}L_j \varepsilon/A^{n}, \quad t\in[a,\sigma^{n-1}(b)]_\T. $$
Thus there exists a solution $w_n\in\crdone[a,\sigma^{n-1}(b)]{_\T}$ of
$$ w^\Delta(t) - \frac{\lambda_{1}}{\varphi(t)}w(t)-\frac{w_{n-1}(t)}{\varphi(t)}=0, \quad\text{or}\quad \varphi(t) w^\Delta(t) - \lambda_{1} w(t)-w_{n-1}(t)=0, $$
$t\in[a,\sigma^{n-1}(b)]_\T$, where $w_n$ is given by
\begin{equation}\label{wnform}
 w_n(t) = e_\frac{\lambda_{1}}{\varphi}(t,\tau_n)x(\tau_n)+\int_{\tau_n}^t e_\frac{\lambda_{1}}{\varphi}(t,\sigma(s))\frac{w_{n-1}(s)}{\varphi(s)}\Delta s, \quad\text{any}\quad \tau_n\in[a,\sigma^n(b)]_{\T}, 
\end{equation}
and there exists an $L_n>0$ such that
\begin{equation}\label{xminusw}
 |x(t)-w_n(t)| \le K\varepsilon:=\prod_{j=1}^{n}L_j \varepsilon/A^n, \quad t\in[a,\sigma^n(b)]_\T. 
\end{equation}
By construction, 
\begin{eqnarray*}
 \left(\varphi D-\lambda_1 I\right)w_n(t) &=& w_{n-1}(t) \\
 \prod_{k=1}^{2}\left(\varphi D-\lambda_k I\right)w_n(t) &=& \left(\varphi D-\lambda_2 I\right)w_{n-1}(t) = w_{n-2}(t) \\
 \vdots & & \\
 \prod_{k=1}^{n}\left(\varphi D-\lambda_k I\right)w_n(t) &=& \left(\varphi D-\lambda_n I\right)w_{1}(t) \stackrel{\eqref{w1sol}}{=} f(t)
\end{eqnarray*}
on $[a,\sigma^{n-1}(b)]_\T$, so that $u=w_n$ is a solution of \eqref{2ndivc}, with $u\in\crdone[a,\sigma^{n-1}(b)]{_\T}$ and $|x(t)-w_n(t)| \le K\varepsilon$ for $t\in[a,\sigma^n(b)]_\T$ by \eqref{xminusw}. Moreover, using \eqref{wnform} and \eqref{wkform}, we have an iterative formula for this solution $u=w_n$ in terms of the function $x$ given at the beginning of the proof.
\end{proof}


\end{document}